\newtheorem{thm}{Theorem} % change to \newtheorem{thm}{Theorem}[section] to number theorems by section
\newtheorem{conj}[thm]{Conjecture}
\newtheorem{lemma}[thm]{Lemma}
\newtheorem{prop}[thm]{Proposition}
\newcommand{\mnorm}[1]{%
\left\vert\kern-0.9pt\left\vert\kern-0.9pt\left\vert #1
\right\vert\kern-0.9pt\right\vert\kern-0.9pt\right\vert}
\newcommand{\bmnorm}[1]{%
\big\vert\kern-0.9pt\big\vert\kern-0.9pt\big\vert #1
\big\vert\kern-0.9pt\big\vert\kern-0.9pt\big\vert}
\begin{document}

\begin{frontmatter}

%% Title, authors and addresses

%% use the tnoteref command within \title for footnotes;
%% use the tnotetext command for the associated footnote;
%% use the fnref command within \author or \address for footnotes;
%% use the fntext command for the associated footnote;
%% use the corref command within \author for corresponding author footnotes;
%% use the cortext command for the associated footnote;
%% use the ead command for the email address,
%% and the form \ead[url] for the home page:

\title{Non-Uniqueness of Minimal Superpermutations}

\author[IQC,UG]{Nathaniel Johnston}
\ead{nathaniel.johnston@uwaterloo.ca}

\address[IQC]{Institute for Quantum Computing, University of Waterloo, Waterloo, Ontario N2L~3G1, Canada}
\address[UG]{Department of Mathematics \& Statistics, University of Guelph, Guelph, Ontario N1G~2W1, Canada}

%%%%%%%%%%%%%%%%%%%%%%%%%%%%%%%%%%%%%%%%%%%%%%%%%%%%%%%%%%%
\begin{abstract}
	We examine the open problem of finding the shortest string that contains each of the $n!$ permutations of $n$ symbols as contiguous substrings (i.e., the shortest superpermutation on $n$ symbols). It has been conjectured that the shortest superpermutation has length $\sum_{k=1}^n k!$ and that this string is unique up to relabelling of the symbols. We provide a construction of short superpermutations that shows that, if the conjectured minimal length is true, then uniqueness fails for all $n \geq 5$. Furthermore, uniqueness fails spectacularly; we construct more than doubly-exponentially many distinct superpermutations of the conjectured minimal length.
\end{abstract}
%%%%%%%%%%%%%%%%%%%%%%%%%%%%%%%%%%%%%%%%%%%%%%%%%%%%%%%%%%%

%%%%%%%%%%%%%%%%%%%%%%%%%%%%%%%%%%%%%%%%%%%%%%%%%%%%%%%%%%%
\begin{keyword}
superpermutation \sep permutation \sep shortest superstring

%\MSC 05A05 \sep 20B99
\end{keyword}
%%%%%%%%%%%%%%%%%%%%%%%%%%%%%%%%%%%%%%%%%%%%%%%%%%%%%%%%%%%

\end{frontmatter}

%%%%%%%%%%%%%%%%%%%%%%%%%%%%%%%%%%%%%%%%%%%%%%%%%%%%%%%%%%%
\section{Introduction}\label{sec:intro}
%%%%%%%%%%%%%%%%%%%%%%%%%%%%%%%%%%%%%%%%%%%%%%%%%%%%%%%%%%%

The shortest superstring problem asks for the shortest string containing each string $s_i$ in a given set $S := \{s_1,s_2,\ldots,s_m\}$ as a contiguous substring. The shortest superstring problem is NP-hard~\cite{GMS80} in general, and finding algorithms for computing or approximating shortest superstrings is an active area of research \cite{GL01,KS05}.

In certain special cases, the shortest superstring problem simplifies considerably. Perhaps most well-known of these cases arises when the set of substrings $S$ consists of all words of length $k$ on $n$ symbols, where $k$ and $n$ are given positive integers. There are $n^k$ such words, so there is a trivial lower bound of length $n^k + k - 1$ on any string containing them all as substrings. More interestingly, however, it has been shown \cite{AB51} that this length is always attainable. A string containing all such words that attains this minimal length is called a \emph{De Bruijn sequence}.

In the present paper, we consider the related problem that arises when the set of substrings $S$ contains all permutations of $n$ symbols. That is, we are interested in the problem that asks for the shortest string on the $n$ symbols $[n] := \{1,2,3,\ldots,n\}$ that contains all $n!$ permutations of those symbols as contiguous substrings. Following the terminology of \cite{AT93}, we call a string containing all permutations in this way a \emph{superpermutation}, and we call a superpermutation of minimum length a \emph{minimal superpermutation}.

Similar to the De Bruijn sequence case, there are $n!$ permutations of $n$ symbols, so a trivial lower bound on the length of a minimal superpermutation is $n! + n - 1$. Unlike in the De Bruijn sequence case, however, this bound is not tight when $n \geq 3$. For $n = 2$ a minimal superpermutation is trivially $121$, which contains both permutations $12$ and $21$ as substrings. For $n = 3$ it is not difficult to show that a shortest string containing each of $123$, $132$, $213$, $231$, $312$, and $321$ as contiguous substrings is $123121321$. For small values of $n$, the optimal solutions have been computed via brute-force computer search \cite{A180632} --- Table~\ref{table:seq_begin} summarizes the shortest strings for $1 \leq n \leq 4$.
\begin{table}[ht]
\centerline{
	\begin{tabular}{ c | l | c }
	  $n$ & Minimal Superpermutation & Length \\\hline
	  $1$ & $1$ & $1$ \\
	  $2$ & $121$ & $3$ \\
	  $3$ & $123121321$ & $9$ \\
	  $4$ & $123412314231243121342132413214321$ & $33$
	\end{tabular}}
	\caption{The shortest strings containing every permutation of $\{1,2,\ldots,n\}$ for $1 \leq n \leq 4$.}\label{table:seq_begin}
\end{table}

Computing minimal superpermutations for $n \geq 5$ isn't feasible via brute-force, but the solutions for $n \leq 4$ provide some structure that is easily generalized. For example, there is a natural recursive construction that can be used to create small superpermutations on $n$ symbols for any $n$ (more specifically, superpermutations of length $\sum_{k=1}^n k!$), and this procedure produces the superpermutations provided in Table~\ref{table:seq_begin} when $n \leq 4$. This method of constructing small superpermutations has been rediscovered independently in several different forms \cite{AT93,JBweb,JOweb,YFweb,kolistivra} and it gives, for all $n$, what are currently the only known superpermutations of length at most $\sum_{k=1}^n k!$. Furthermore, the superpermutations provided in Table~\ref{table:seq_begin} have been shown by brute force to be unique up to relabelling the symbols (e.g., the only minimal superpermutations in the $n = 2$ case are $121$ and $212$, which are related to each other by swapping the roles of $1$ and $2$).

These observations have led to the following conjectures:
\begin{conj}[\cite{AT93,A180632,JBweb,JOweb,YFweb,JBweb2}]\label{conj:min_length}
	The length of the minimal superpermutation on $n$ symbols is $\sum_{k=1}^n k!$.
\end{conj}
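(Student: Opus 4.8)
The plan is to prove the stated equality by matching upper and lower bounds on the length $L_n$ of a minimal superpermutation on $[n]$.

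For the upper bound I would make precise the recursive construction already hinted at in the introduction. Starting from a superpermutation $S_{n-1}$ on $[n-1]$, pass through it and replace each of its $(n-1)!$ occurrences of a permutation $\pi = \pi_1\pi_2\cdots\pi_{n-1}$ by the block $\pi_1\cdots\pi_{n-1}\,n\,\pi_1\cdots\pi_{n-1}$. The $n$ length-$n$ windows of this block are exactly the $n$ permutations of $[n]$ that restrict to $\pi$ on $[n-1]$, so every permutation of $[n]$ appears in the result. Tracking the overlaps between consecutive blocks shows that the construction contributes exactly $n$ new symbols per permutation of $[n-1]$, giving the recurrence $L_n = L_{n-1} + n!$; together with $L_1 = 1$ this yields $L_n = \sum_{k=1}^n k!$ and reproduces the entries of Table~\ref{table:seq_begin}.

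The lower bound is where the genuine difficulty lies. I would model any superpermutation as a covering walk on the \emph{overlap graph} whose $n!$ vertices are the permutations of $[n]$. Reading the string left to right, the first permutation costs $n$ symbols and each later new permutation costs between $1$ and $n$, where the \emph{cost} of a transition is $n$ minus the overlap between the current suffix and the next permutation. Appending a single symbol $c$ to a suffix ending in $\pi_1\pi_2\cdots\pi_n$ produces the window $\pi_2\cdots\pi_n c$, which is a permutation only when $c = \pi_1$; hence cost-$1$ transitions are exactly the cyclic rotations, and they partition the $n!$ vertices into $(n-1)!$ directed cycles of length $n$. Since a walk can draw at most $n-1$ cost-$1$ edges from each such cycle, at least $(n-1)!-1$ of the $n!-1$ transitions must have cost $\geq 2$, which already forces $L_n \geq n! + (n-1)! + n - 2$.

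The obstacle, and the reason the statement is still a conjecture rather than a theorem, is that this cyclic-class bound only captures the top two terms $n! + (n-1)!$ of the target sum. Recovering the entire tail $\sum_{k=1}^{n-2} k!$ would require an analogue of the argument one level down: the permutations reachable at cost $2$ after one traverses a full cycle are themselves globally constrained into higher-order classes, and so on through cost $3$, cost $4$, and beyond, in a self-similar fashion. Controlling this interaction of the overlap structure across all cost scales at once --- rather than just the cheapest one --- is precisely what I expect to be unable to push through, and is what no existing argument has been able to establish.
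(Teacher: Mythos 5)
You have attempted to prove a statement that the paper itself does not prove and explicitly labels as a conjecture: there is no ``paper proof'' to compare against, and indeed the paper's entire contribution is to show that Conjecture~\ref{conj:min_length} and Conjecture~\ref{conj:unique} cannot both hold. Within your proposal, the two halves have very different status. The upper bound $L_n \leq \sum_{k=1}^n k!$ is correct and is exactly the paper's Section~\ref{sec:construct} construction of $M_n$ (your block $\pi_1\cdots\pi_{n-1}\,n\,\pi_1\cdots\pi_{n-1}$ is the string $Q_j$, and the overlap bookkeeping gives the recurrence $|M_n| = |M_{n-1}| + n!$). Your lower bound via cyclic classes is also correct as far as it goes: cost-$1$ transitions are precisely cyclic rotations, each of the $(n-1)!$ rotation cycles can contribute at most $n-1$ of them, and this yields $L_n \geq n! + (n-1)! + n - 2$, which is essentially the strength of what was known at the time (cf.\ \cite[Theorem~18]{AT93}). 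But the distance between these two bounds is the entire content of the conjecture, and you concede you cannot close it, so the proposal is not a proof.

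The gap is not merely hard to fill --- it is unfillable. Subsequent to this paper, the conjecture was disproved: Houston (2014) constructed a superpermutation on $6$ symbols of length $872$, strictly less than $\sum_{k=1}^6 k! = 873$, and shorter-than-conjectured superpermutations are now known for all $n \geq 6$. Consequently the ``self-similar'' extension of your cyclic-class argument to higher cost scales, which you hoped would recover the full tail $\sum_{k=1}^{n-2} k!$, must genuinely break down: the higher-order classes do not constrain the walk as rigidly as the cost-$1$ cycles do, and a covering walk can exploit this slack starting at $n = 6$. Your instinct that this is where the difficulty lies was exactly right; the honest conclusion is that only the upper-bound half of the statement is true in general, and the correct reading of the conjectured equality is as an open (and, as it turned out, false) claim about the lower bound.
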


\begin{conj}[\cite{AT93}]\label{conj:unique}
	The minimal superpermutation on $n$ symbols is unique up to interchanging the roles of the symbols.
\end{conj}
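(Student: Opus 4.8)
The plan is to refute Conjecture~\ref{conj:unique} rather than to prove it, which is the content promised by the paper's title and abstract. Since the minimal length $\sum_{k=1}^n k!$ of Conjecture~\ref{conj:min_length} is itself still open, I would not try to show that anything is minimal; instead I would construct many pairwise-inequivalent superpermutations whose length is \emph{exactly} $\sum_{k=1}^n k!$, so that if Conjecture~\ref{conj:min_length} holds then all of them are simultaneously minimal and Conjecture~\ref{conj:unique} fails. The first step is to recast the standard recursive construction graph-theoretically. On the $n!$ permutations of $[n]$, place a directed edge $\pi \to \rho$ of weight $n-\ell$, where $\ell$ is the length of the longest suffix of $\pi$ equal to a prefix of $\rho$; a superpermutation of length $L$ then corresponds to a Hamiltonian path of total weight $L-n$. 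The weight-$1$ edges carry each permutation to its cyclic left shift, so they split the vertices into $(n-1)!$ disjoint $n$-cycles, and a superpermutation of the conjectured length can be realized by a walk that traverses each cycle as a Hamiltonian sub-path and joins consecutive cycles by jumps whose total weight is as small as possible.

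Next I would locate the freedom in this picture. To traverse one $n$-cycle as a path one chooses an entry vertex, which simultaneously fixes the exit vertex (its cyclic predecessor); different entry vertices change the jump edges into and out of the cycle. The key claim to establish is that the minimum jump total can be attained by many different combinations of (i) the order in which the $(n-1)!$ cycles are visited and (ii) the entry vertex chosen inside each cycle. I would first verify this at the smallest nontrivial case $n=5$, exhibiting two distinct length-$153$ superpermutations coming from incompatible but equally cheap choices at a single junction, which is consistent with the brute-force uniqueness for $n\le 4$. I would then show that the mechanism persists for every $n \geq 5$: once one junction admits two minimum-weight realizations, embedding it repeatedly through the recursion produces independent choices.

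To finish I would (a) confirm that every string produced has length exactly $\sum_{k=1}^n k!$ by summing the within-cycle weight $(n-1)!(n-1)$, the base contribution $n$, and the minimal jump total, and (b) bound the number of inequivalent outputs from below. For (b), two of my strings coincide up to relabelling only if one of the $n!$ symbol permutations carries one to the other, so the number of equivalence classes is at least the number of \emph{distinct} strings divided by $n!$. Since $n!$ is of merely exponential order, it suffices to produce more than doubly-exponentially many distinct strings, and I would obtain this by showing that the number of independently choosable junctions grows super-exponentially, on the order of $(n-1)!$, so that the count is at least of the form $2^{(n-1)!}$, which dominates any double exponential $2^{2^{cn}}$.

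I expect the main obstacle to be verifying that the perturbed traversals remain genuine superpermutations of exactly the minimal length: altering the entry vertices and the cycle order must neither skip a permutation nor inflate a single jump weight, so it demands careful bookkeeping that each of the $(n-1)!$ cycles is still entered and fully traversed and that every rerouted jump is still of minimum weight. A secondary difficulty is ensuring that distinct choice-configurations yield distinct strings, so that the count in (b) is not eroded by collisions; I would handle this by reading the configuration back off the string from the positions of its transitions of weight at least $2$.
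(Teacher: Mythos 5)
Your overall strategy---refuting uniqueness conditionally, by exhibiting many strings of length exactly $\sum_{k=1}^n k!$ so that Conjecture~\ref{conj:min_length} would force them all to be minimal---is precisely the paper's, and your graph-theoretic framing (permutations as vertices, overlap-deficiency weights, weight-$1$ edges partitioning the vertices into $(n-1)!$ cycles) is a legitimate reformulation. The gap is that the heart of the argument is asserted rather than supplied. Your ``key claim,'' that the minimum jump total is attained by many combinations of cycle order and entry vertices, is exactly what must be proved, and the local mechanism you propose---``two equally cheap choices at a single junction''---fails as stated. From an exit vertex $\pi_1\pi_2\cdots\pi_n$ there are only two weight-$2$ targets, namely $\pi_3\cdots\pi_n\pi_1\pi_2$ and $\pi_3\cdots\pi_n\pi_2\pi_1$, and the first lies in the cycle just traversed; hence every weight-$2$ continuation is forced, and changing an entry vertex changes the corresponding exit vertex and so re-routes everything downstream rather than staying local. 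The paper's alternative traversals are not junction choices at all: they relabel the symbols $k+2,\ldots,n$ inside an entire segment $T_{j,k}$ of $M_n$, and the reason this is legal is a genuine lemma (Lemma~\ref{lem:generalize_helper}, proved via the circular-shift order of Proposition~\ref{prop:circ_shift_ordering}): the set of permutations occurring in $T_{j,k}$ is invariant under any such relabelling, and the first and last $k+1$ symbols of $T_{j,k}$ all lie in $\{1,\ldots,k+1\}$, so the splice weights are unaffected. Some invariance statement of this kind is the missing idea; without it, your ``main obstacle'' paragraph restates the theorem instead of giving a plan to prove it.

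Your counting also overreaches. There are only $(n-1)!-1$ junctions in total, and by the rigidity above all but the $(n-2)!-1$ junctions of weight at least $3$ admit no choice, so ``independently choosable junctions \ldots on the order of $(n-1)!$'' yielding $2^{(n-1)!}$ strings cannot be realized this way. The paper's independence instead comes from disjoint segments at each scale $k=2,\ldots,n-3$, with $k!-k$ usable segments at scale $k$ (segments with $j$ a multiple of $(k-1)!$ are excluded precisely to keep the prefix $12\cdots n$ fixed and to avoid double counting across scales---an issue your sketch does not confront), giving $\prod_{k=2}^{n-3}(n-k-1)!^{k!-k}$, which is roughly $2^{(n-3)!}$ from the finest scale alone. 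That is still more than doubly exponential, so the qualitative conclusion you are after survives with the smaller, differently organized count. Finally, your device of dividing the number of distinct strings by $n!$ to handle relabelling-equivalence is workable as a lower bound, though the paper's normalization---fix the prefix $12\cdots n$ and never alter the initial segment---avoids that loss entirely.
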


In this work we generalize the well-known method of construction of small superpermutations in a new way. Our construction shows that, for all $n \geq 5$, at least one of these two conjectures fails. It is known that Conjecture~\ref{conj:min_length} has to at least be ``almost'' true in the sense that the conjectured length can't be too far from the true minimal length \cite[Theorem~18]{AT93}. On the other hand, our construction shows that if Conjecture~\ref{conj:min_length} is true, then Conjecture~\ref{conj:unique} fails spectacularly as $n$ increases. For example, our construction in the $n = 8$ case provides more than $3 \times 10^{50}$ distinct superpermutations of the conjectured minimal length.

Before proceeding, we present the notational conventions and representations of permutations that we will use throughout this paper. In order to avoid double-counting superpermutations that are related to each other by interchanging the roles of the symbols, we only consider superpermutations that begin with the substring $12\cdots n$. We typically represent a permutation $\sigma \in S_n$ via its one-line representation $\sigma(1)\sigma(2)\cdots\sigma(n)$, which we think of as a string (for example, the identity permutation corresponds to the string $12\cdots n$).

It is also sometimes useful for us to represent a permutation $\sigma$ by another encoding that we now describe. Define $p_k \in S_n$ to be the permutation that cyclically increases the first $k$ positive integers by one and acts as the identity otherwise: $p_k(i) = i+1$ if $1 \leq i < k$, $p_k(i) = i$ if $k < i \leq n$, and $p_k(k) = 1$. Then $p_k^j$ cyclically increases the first $k$ positive integers by $j$ and acts as the identity otherwise, and for every permutation $\sigma \in S_n$, there is a unique tuple of exponents $(j_2, j_{3}, \ldots, j_n)$ with $0 \leq j_i < i$ for all $i$ so that $\sigma = p_2^{j_2} \circ p_{3}^{j_{3}} \circ \cdots \circ p_n^{j_n}$. To write $\sigma$ in this way, first choose $j_n$ so that the correct integer is mapped to $n$, then choose $j_{n-1}$ so that the correct integer is mapped to $n-1$, and so on. To see uniqueness, we simply note that there are exactly $n!$ tuples $(j_2, j_{3}, \ldots, j_n)$ satisfying $0 \leq j_i < i$ for all $i$, which is the same as the size of $S_n$.

Rather than explicitly writing a permutation in the form $p_2^{j_2} \circ p_{3}^{j_{3}} \circ \cdots \circ p_n^{j_n}$, we simply write the exponents in order within square brackets and a subscript $c$, as in $[j_2 j_{3} \cdots j_n]_c$. We call this representation the \emph{circular shift representation}. For example, both $42351$ (one-line notation) and $[0121]_c$ (circular shift representation) represent the same permutation.

As perhaps a slightly more intuitive way of converting a permutation's circular shift representation to its one-line representation, start with the string $12\cdots n$. Cycle the first $2$ characters of the string to the left the number of times indicated by $j_2$, then cycle the first $3$ characters of the string to the left the number of times indicated by $j_3$, and so on. So to obtain the one-line representation of $[0121]_c$, we start with $12345$ and transform it as follows:
\begin{align*}
	12345 \stackrel{0}{\rightarrow} 12345 \stackrel{1}{\rightarrow} 23145 \stackrel{2}{\rightarrow} 14235 \stackrel{1}{\rightarrow} 42351.
\end{align*}

%%%%%%%%%%%%%%%%%%%%%%%%%%%%%%%%%%%%%%%%%%%%%%%%%%%%%%%%%%%
\section{Construction of a Small Superpermutation}\label{sec:construct}
%%%%%%%%%%%%%%%%%%%%%%%%%%%%%%%%%%%%%%%%%%%%%%%%%%%%%%%%%%%

In this section we briefly present a well-known method for constructing a family of strings $\{M_1, M_2, M_3, \ldots\}$ with the property that $M_n$ is a superpermutation on $[n]$ of length $|M_n| = \sum_{k=1}^n k!$.

First, let $M_1 = 1$, the string with just one symbol. Then we construct $M_{n+1}$ from $M_n$ via the following procedure:
\begin{enumerate}[(a)]
	\item Let $P_j$ ($0 \leq j < n!$) be the $j$th permutation of $[n]$ that appears as a substring of $M_n$.
	\item Let $Q_j$ ($0 \leq j < n!$) be the string of length $2n+1$ obtained by concatenating the string $P_j$, the symbol $n+1$, and again the string $P_j$, in that order.
	\item Concatenate $Q_0, Q_1, \ldots, Q_{n!-1}$, in that order, overlapping as much as possible each consecutive pair of strings.
\end{enumerate}
To demonstrate this procedure in the $n = 2$ case, start with $M_2 = 121$. Then $P_0 = 12$ and $P_1 = 21$, so $Q_0 = 12312$ and $Q_1 = 21321$. If we concatenate $Q_0$ and $Q_1$, overlapping as much as possible, we get $M_3 = 123121321$, where we overlapped the $2$ in the middle of the string. The procedure is illustrated in the $n = 3$ case in Figure~\ref{fig:s3tree}.
\begin{figure}[ht]
	\centerline{\includegraphics[width=\textwidth]{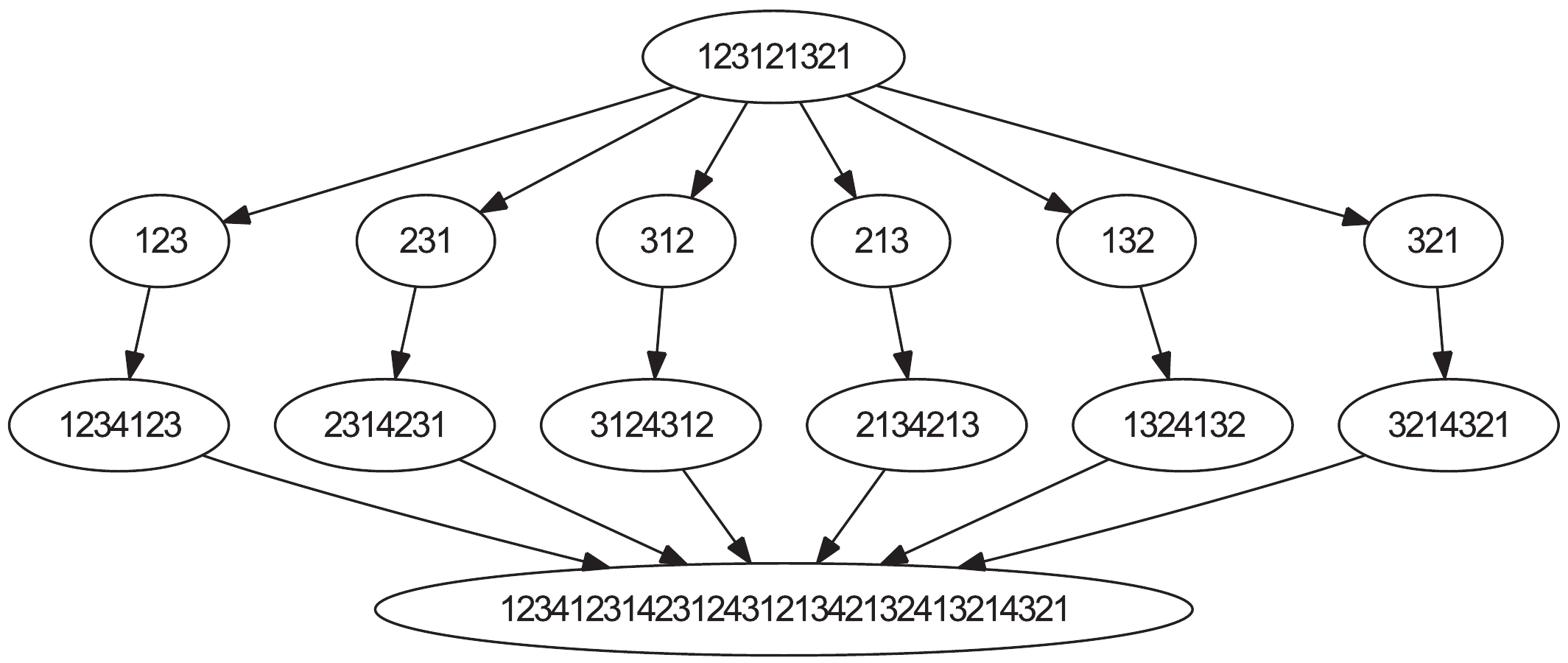}}
	\caption{A tree that demonstrates how to construct $M_4$ from $M_3$.}\label{fig:s3tree}
\end{figure}

The fact that each $M_n$ is a superpermutation on $[n]$ with $|M_n| = \sum_{k=1}^n k!$ follows easily via induction, so we don't present the argument here. The intuition behind why these superpermutations are so efficient is simple --- they are the ``greedy'' superpermutations obtained by starting with the string $12\cdots n$ and repeatedly appending as few symbols as possible so as to add a new permutation as a contiguous substring. However, minimality and uniqueness have only been proved when $n \leq 4$.

It is interesting to observe the circular shift representation of all permutations in the order that they appear in $M_n$ (and indeed, this observation is our primary reason for introducing circular shift representations in the first place). For example, when $n = 3$, the permutations appear in $M_n$ in the order $123$, $231$, $312$, $213$, $132$, $321$; these have circular shift representations $[00]_c$, $[01]_c$, $[02]_c$, $[10]_c$, $[11]_c$, and $[12]_c$, respectively. In other words, the order of the permutations in $M_3$ is the same as the order we get if we ``count'' in the circular shift representation. It turns out that this is true in general, as demonstrated by the following proposition.
\begin{prop}\label{prop:circ_shift_ordering}
	Let $n$ and $j$ be positive integers with $0 \leq j < n!$. If we write
	\begin{align*}
		j = \sum_{i=2}^n j_{i} \cdot n!/i!
	\end{align*}
	with $0 \leq j_i < i$ for all $i$ then the $(j+1)$th permutation to appear in $M_n$ as a contiguous substring is $[j_2 j_{3} \cdots j_n]_c$.
\end{prop}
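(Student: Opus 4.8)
The plan is to induct on $n$. First I would observe that the proposition is well-posed: the assignment $(j_2,\ldots,j_n)\mapsto \sum_{i=2}^n j_i\, n!/i!$ is exactly the factorial-style mixed-radix representation of the integers $0\le j<n!$, since the ``place value'' $n!/i!$ of the digit $j_i$ is $i$ times the place value $n!/(i-1)!$ of $j_{i-1}$ while $j_i$ ranges over the $i$ values $0,\ldots,i-1$; hence the map is a bijection onto $\{0,1,\ldots,n!-1\}$, and every $j$ in range determines a unique tuple and thus a unique permutation $[j_2\cdots j_n]_c$. The base case $n=1$ is immediate. For the inductive step I assume the statement for $M_n$, so that the $(j+1)$th permutation appearing in $M_n$ is $P_j=[j_2\cdots j_n]_c$ whenever $j=\sum_{i=2}^n j_i\,n!/i!$, and I analyze the construction of $M_{n+1}$ from the blocks $Q_j=P_j\,(n+1)\,P_j$.

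The first key step is to identify, in circular shift form, the $n+1$ permutations of $[n+1]$ that occur as length-$(n+1)$ windows of a single block $Q_j$. Writing $P_j=b_1b_2\cdots b_n$, the window of $Q_j$ beginning at position $k$ is the rotation $b_k\cdots b_n\,(n+1)\,b_1\cdots b_{k-1}$ for $1\le k\le n+1$. I would match this against the circular shift representation $[j_2\cdots j_n\,(k-1)]_c$ using the ``cycle the first $i$ characters'' description: applied to $12\cdots(n+1)$, the cyclings by $j_2,\ldots,j_n$ touch only the first $n$ positions, which start as $12\cdots n$, and therefore produce exactly $P_j$ in those positions while leaving $n+1$ in the last position, i.e.\ the string $b_1\cdots b_n\,(n+1)$; a final cyclic shift of all $n+1$ characters by $k-1$ then yields precisely $b_k\cdots b_n\,(n+1)\,b_1\cdots b_{k-1}$. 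Hence the windows of $Q_j$, read left to right (that is, in order $k=1,\ldots,n+1$), are exactly $[j_2\cdots j_n\,0]_c,\,[j_2\cdots j_n\,1]_c,\,\ldots,\,[j_2\cdots j_n\,n]_c$.

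The main obstacle is the bookkeeping at the overlaps: I must confirm that overlapping consecutive blocks neither reorders, merges, nor discards any of these windows, so that the blocks really do contribute their windows consecutively and in block order. The clean way to do this is to track the single copy of the symbol $n+1$ contained in each block. I would first show the overlap of $Q_j$ with $Q_{j+1}$ is at most $n-1$ symbols: any overlap of $n$ or more would force either $P_j=P_{j+1}$ or the alignment of the lone $n+1$ of one block against a symbol of $[n]$ in the other, both impossible. Consequently the two copies of $n+1$ coming from $Q_j$ and $Q_{j+1}$ sit at distance $(2n+1)-(\text{overlap})\ge n+2$ in $M_{n+1}$, so no window of length $n+1$ can contain two of them. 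Since every permutation of $[n+1]$ contains the symbol $n+1$ exactly once, the valid windows of $M_{n+1}$ are precisely those containing one of these copies; and the $n+1$ windows containing a fixed copy (the one in $Q_j$) all lie inside that block's $2n+1$ positions, so they coincide with the windows of $Q_j$ computed above and are genuine permutations.

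Finally I would assemble the order. Because the copies of $n+1$ occur in $M_{n+1}$ in block order $j=0,1,\ldots,n!-1$ and, within a block, the windows occur in order of increasing last digit $j_{n+1}=k-1$, the permutation $[j_2\cdots j_n\,j_{n+1}]_c$ appears in position $(n+1)\,j+j_{n+1}$ among all permutations of $[n+1]$. A short computation gives $(n+1)\sum_{i=2}^n j_i\,n!/i! + j_{n+1} = \sum_{i=2}^{n+1} j_i\,(n+1)!/i!$, which is exactly the counting value for $n+1$, completing the induction.
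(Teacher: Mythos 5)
Your proof is correct and follows essentially the same inductive strategy as the paper's: both arguments proceed by induction on $n$ and identify the $n+1$ windows of each block $Q_j$ as $[j_2\cdots j_n\,0]_c$ through $[j_2\cdots j_n\,n]_c$ in order. The only difference is one of rigor, not of approach: you explicitly verify the overlap bookkeeping (that consecutive blocks overlap in at most $n-1$ symbols, so no window straddles two copies of $n+1$) and the final mixed-radix arithmetic, both of which the paper compresses into ``The result follows.''
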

Before proving Proposition~\ref{prop:circ_shift_ordering}, we note that every integer $0 \leq j < n!$ can be written (uniquely) in the desired form --- this is just a slight variant on the factorial number system \cite{Knu73} in which the digits are allowed to increase from left to right, rather than from right to left.
\begin{proof}[Proof of Proposition~\ref{prop:circ_shift_ordering}]
	We prove the result by induction. We already noted that the result is true when $n = 3$, and it is also easily verified when $n = 1$ or $n = 2$. We thus move directly to the inductive step.
	
	Suppose that the result holds for a given value of $n$. Then in the construction of $M_{n+1}$ given earlier, $P_j$ is the $(j+1)$th permutation of $n$ symbols that appears in $M_n$, so $P_j = [j_2 j_{3} \cdots j_n]_c$ by the inductive hypothesis. Then $Q_j$ contains $n+1$ permutations of $[n+1]$ as substrings, the first of which is $[j_2 j_{3} \cdots j_n 0]_c$. The second permutation contained as a substring of $Q_j$ is just a cyclic permutation of the first: $[j_2 j_{3} \cdots j_n 0]_c \circ p_{n+1} = [j_2 j_{3} \cdots j_n 1]_c$. Similarly, the third permutation that appears as a substring of $Q_j$ is $[j_2 j_{3} \cdots j_n 1]_c \circ p_{n+1} = [j_2 j_{3} \cdots j_n 2]_c$, and so on up to $[j_2 j_{3} \cdots j_n n]_c$. The result follows.
\end{proof}

%%%%%%%%%%%%%%%%%%%%%%%%%%%%%%%%%%%%%%%%%%%%%%%%%%%%%%%%%%%
\section{Non-Uniqueness When \texorpdfstring{$n = 5$}{n = 5}}\label{sec:n5construct}
%%%%%%%%%%%%%%%%%%%%%%%%%%%%%%%%%%%%%%%%%%%%%%%%%%%%%%%%%%%

In Section~\ref{sec:gen_construct} we present our generalized construction of small superpermutations for arbitrary $n$, but for now we simply present and motivate it in the first non-trivial case (i.e., when $n = 5$) for clarity.

To start, notice that the first $60$ permutations to appear as substrings of $M_5$ are those of the form $[0xyz]_c$ for some $0 \leq x < 3$, $0 \leq y < 4$, and $0 \leq z < 5$, while the last $60$ permutations to appear as substrings of $M_5$ are those of the form $[1xyz]_c$. Phrased differently, the first $60$ permutations are those in which the symbols $1$ and $2$ have not been transposed. In other words, when written in one-line notation, if the $4$ and $5$ are erased, we are left with $123$, $231$, or $312$. Similarly, the last $60$ permutations to appear in $M_5$ are those that result in $213$, $132$, or $321$ when the $4$ and $5$ are removed from their one-line representation. Since this property has nothing to do with the order of $4$ and $5$ in the permutation's one-line representation, we conclude that a permutation is one of the first (last) $60$ permutations to appear as a substring of $M_5$ if and only if interchanging the roles of $4$ and $5$ in its one-line representation results again in one of the first (last) $60$ permutations to appear in $M_5$.

This leads immediately to the key idea: we can just replace all $4$'s by $5$'s (and vice-versa) in the last half of $M_5$ to generate a new string $M_5^\prime$ that is of the same length and is still a superpermutation. These two strings are written in full in Table~\ref{table:M5_and_new}, and it is easily verified by computer that they are indeed distinct superpermutations of length $\sum_{k=1}^5 k! = 153$.
\begin{table}[ht]
\centerline{
	\begin{tabular}{ l | l }
	  $M_5$ & $1234512341523412534123541231452314253142351423154231\cdot$ \\
& $\quad\quad\quad\cdot 245312435124315243125431\mathbf{2}134521342513421534213542\cdot$ \\
& $\quad\quad\quad\quad\quad\quad\cdot 1324513241532413524132541321453214352143251432154321$
 \\\hline
	  $M_5^\prime$ & $1234512341523412534123541231452314253142351423154231\cdot$ \\
& $\quad\quad\quad\cdot 245312435124315243125431\mathbf{2}135421352413521435213452\cdot$ \\
& $\quad\quad\quad\quad\quad\quad\cdot 1325413251432513425132451321543215342153241532145321$
	\end{tabular}}
	\caption{Two distinct superpermutations on $[5]$ of length $\sum_{k=1}^5 k! = 153$. The second string is obtained from the first simply by interchanging the roles of the symbols $4$ and $5$ in the second half of the string (i.e., after the bold \textbf{2}).}\label{table:M5_and_new}
\end{table}

%%%%%%%%%%%%%%%%%%%%%%%%%%%%%%%%%%%%%%%%%%%%%%%%%%%%%%%%%%%
\section{Generalized Construction}\label{sec:gen_construct}
%%%%%%%%%%%%%%%%%%%%%%%%%%%%%%%%%%%%%%%%%%%%%%%%%%%%%%%%%%%

We now show how the idea from Section~\ref{sec:n5construct} extends to the $n \geq 6$ case. While the argument only provides two distinct superpermutations of the conjectured minimal length in the $n = 5$ case, our main result shows that the number of small superpermutations produced grows extremely quickly as $n$ increases.
\begin{thm}\label{thm:non_unique}
	There are at least
	\begin{align}\label{eq:non_unique_formula}
		\prod_{k=1}^{n-4} (n-k-2)!^{k\cdot k!}.
	\end{align}
	distinct (up to relabelling of the symbols) superpermutations on $[n]$ of length $\sum_{k=1}^n k!$.
\end{thm}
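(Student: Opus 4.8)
The plan is to produce the claimed superpermutations by locally relabelling ``top'' symbols inside carefully chosen contiguous blocks of $M_n$, and then to count the independent choices. Throughout I identify a permutation with its circular shift representation $[j_2 j_3 \cdots j_n]_c$, so that by Proposition~\ref{prop:circ_shift_ordering} the permutations occur in $M_n$ in lexicographic order of $(j_2,\ldots,j_n)$. For $2 \le r \le n$, fixing the prefix $(j_2,\ldots,j_r)$ therefore selects $n!/r!$ permutations that appear \emph{consecutively} in $M_n$; I call the corresponding substring a \emph{block of depth $r$}. Since blocks are prefix-defined intervals, any two are nested or disjoint, i.e.\ they form a laminar family. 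The point of the circular shift encoding (cf.\ Section~\ref{sec:n5construct}) is that fixing $(j_2,\ldots,j_r)$ amounts to fixing the relative order of the symbols $1,\ldots,r$: a permutation lies in a given depth-$r$ block if and only if erasing the symbols $r+1,\ldots,n$ from its one-line notation yields a fixed reduced permutation of $[r]$. Consequently, relabelling the symbols $\{r+1,\ldots,n\}$ among themselves is a bijection of any depth-$r$ block onto itself.

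Step one is the local relabelling lemma: if we take a depth-$r$ block and apply a fixed permutation $\rho \in \mathrm{Sym}(\{r+1,\ldots,n\})$ to every symbol of the substring it spans, the result is still a superpermutation of the same length $\sum_{k=1}^n k!$. Length is immediate, since we only substitute symbols. Validity has two parts. Internally, applying one relabelling uniformly across the block's substring is a symmetry of the string and so preserves all overlaps between consecutive permutations inside the block, while $\rho$ maps the block's permutation set onto itself, so the block contributes the same collection of permutations. Externally, I must check the two seams where the block meets the rest of $M_n$. Here is the crux: these seams occur precisely where a ``carry'' in the mixed-radix counter of Proposition~\ref{prop:circ_shift_ordering} reaches digit position $r$, and I claim that at such a boundary the maximal overlap between the last permutation before the block and the first one after (and symmetrically on the right) consists only of symbols $\le r$ --- exactly the symbols fixed by $\rho$. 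Granting this, $\rho$ leaves both seams untouched and the relabelled string remains a valid overlapped concatenation. Establishing this overlap claim by unwinding the recursive definition of $M_n$ is the main obstacle of the proof; the $n=5$ computation in Table~\ref{table:M5_and_new}, where the single depth-$2$ seam is the lone shared symbol $\mathbf{2}$, is the prototype.

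With the lemma in hand, the remaining work is bookkeeping. I index the relabellable blocks by the non-identity permutations $\tau$ of the bottom $n-3$ symbols: to such a $\tau$, with largest non-fixed point $L := \max\{i : \tau(i)\ne i\}$, I associate the depth-$(L+1)$ block on which $\tau$ (extended by fixing $L+1$) is the fixed reduced permutation, and I relabel the top symbols $\{L+2,\ldots,n\}$ there. The number of $\tau \in S_{n-3}$ with $L(\tau)=r-1$ is $(r-1)!-(r-2)! = (r-2)\cdot(r-2)!$, which with $k := r-2$ equals $k\cdot k!$, and each such block admits $(n-r)! = (n-k-2)!$ relabellings; letting $k$ run from $1$ to $n-4$ produces the factor \eqref{eq:non_unique_formula}. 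This partition of the non-identity permutations by largest non-fixed point (which accounts for all $(n-3)!-1$ of them) is exactly what makes the depth of a block, and hence the set of symbols it may move, well defined.

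Finally I must apply all of these relabellings simultaneously and argue the outcomes are genuinely distinct up to relabelling. Processing the laminar family of chosen blocks from the innermost (largest depth) outward, each relabelling permutes only symbols that are erased in the definition of every block enclosing it, so by the lemma each step preserves validity and the cumulative effect on any block is again a uniform relabelling of its substring; thus the final string is a superpermutation of length $\sum_{k=1}^n k!$. Distinctness of the $\prod_{k=1}^{n-4}(n-k-2)!^{k\cdot k!}$ resulting strings follows by recovering the choices: comparing a candidate against $M_n$ on the outermost chosen block (away from its finer sub-blocks) reads off that block's relabelling, after which one undoes it and recurses inward, so the map from tuples of choices to strings is injective. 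Moreover the identity permutation is excluded, so the starting segment $12\cdots n$ is never relabelled; hence any global relabelling carrying one of our strings to another must fix $12\cdots n$ and therefore be trivial, which upgrades ``distinct'' to ``distinct up to relabelling of the symbols'' and completes the count.
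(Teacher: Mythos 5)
There is a genuine and fatal gap in your proposal: your ``local relabelling lemma'' is false for the symbol set you use, because your characterization of depth-$r$ blocks is wrong. Fixing the prefix $(j_2,\ldots,j_r)$ of the circular shift representation does \emph{not} fix the subsequence of the symbols $1,\ldots,r$ in one-line notation; what it fixes is the \emph{cyclic class} of the subsequence of the symbols $1,\ldots,r+1$. (This is exactly what the paper establishes inside the proof of Lemma~\ref{lem:generalize_helper}(c), and it is why that lemma only allows interchanging the symbols $k+2,\ldots,n$ --- not $k+1,\ldots,n$ --- within $T_{j,k}$.) Concretely, for $n=4$ the first depth-$2$ block of $M_4$ consists of the permutations $1234, 2341, 3412, 4123, 2314, 3142, 1423, 4231, 3124, 1243, 2431, 4312$: note that their $\{1,2\}$-subsequences are not constant (some are $12$, some are $21$), and the block contains $2314$ but not $2413$, the image of $2314$ under swapping $3 \leftrightarrow 4$. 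So a depth-$r$ block is invariant under relabelling $\{r+2,\ldots,n\}$ only, never under relabelling $\{r+1,\ldots,n\}$. In your $n=5$ ``prototype'' this means your recipe (relabel $\{4,5\}$ inside a depth-$3$ block of $20$ permutations) is not the move made in Section~\ref{sec:n5construct} (relabel $\{4,5\}$ inside a depth-$2$ block of $60$ permutations, the second half of $M_5$), and your move fails: $23415$ lies in the first depth-$3$ block of $M_5$ but $23514$ does not, so after your swap the string misses $23415$ and contains $23514$ twice --- it is not a superpermutation. Your scheme for attaching blocks to permutations $\tau$ (``the depth-$(L+1)$ block on which $\tau$ is the fixed reduced permutation'') is likewise ill-defined, since no such block exists under the correct characterization.

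This is not a repairable bookkeeping slip within your framework. If you shrink each relabelling group to the legitimate one, a depth-$r$ block admits only $(n-r-1)!$ relabellings rather than your $(n-r)!$, and your product becomes $\prod_{k=1}^{n-4}(n-k-3)!^{k\cdot k!}$, which is strictly smaller than~\eqref{eq:non_unique_formula} (for $n=5$ it gives $1$, not $2$). To recover the stated bound one must apply the $(n-k-1)!$ relabellings of $\{k+2,\ldots,n\}$ to the \emph{shallower} depth-$k$ blocks $T_{j,k}$, which is precisely the paper's route: Lemma~\ref{lem:generalize_helper} supplies the invariance (part (c)) together with the seam control (parts (a) and (b): the overlap has length $\ell<k$ and the boundary characters lie in $\{1,\ldots,k+1\}$, hence are untouched), and the proof then selects, for each $k$ from $n-3$ down to $2$, those depth-$k$ blocks that do not begin a depth-$(k-1)$ block, so as to keep the prefix $12\cdots n$ and avoid double counting across iterations. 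Your arithmetic matches the theorem only by coincidence: you have attributed the paper's depth-$k$ relabellings (of the symbols $\{k+2,\ldots,n\}$) to depth-$(k+1)$ blocks, and the two miscounts cancel. Separately, note that the seam claim you flag as ``the crux'' is asserted but never proved in your write-up; in the paper it is exactly parts (a) and (b) of Lemma~\ref{lem:generalize_helper}, proved by induction on the recursive construction --- but even granting it, the invariance failure above sinks the argument.
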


Before proving the theorem, note that the formula~\eqref{eq:non_unique_formula} gives the empty product when $n \leq 4$, which agrees with the previously-mentioned fact that the minimal superpermutation is unique in these cases. For $n = 5, 6, 7, 8$ it gives the values $2, 96, 8153726976,$ and approximately $3 \times 10^{50}$ respectively. A text file containing all $96$ small superpermutations described by the theorem in the $n = 6$ case can be downloaded from \cite{SuperPerm6}. A text file containing all such superpermutations in the $n = 7$ case would be larger than $43$ terabytes in size and is thus omitted.

The following lemma is our main tool in the proof of the theorem. Before presenting the lemma, we note that each permutation appears in $M_n$ exactly once \cite[Lemma~4]{AT93}. It follows that the string $T_{j,k}$ defined in the statement of the lemma is unique.
\begin{lemma}\label{lem:generalize_helper}
	Let $2 \leq k < n$ be integers. For each $0 \leq j < k!$, let $T_{j,k}$ be the shortest substring of $M_n$ that contains each of the $(j\cdot n!/k!+1)$th through $((j+1)\cdot n!/k!)$th permutations to appear in $M_n$ as substrings. Then for all $j$,
	\begin{enumerate}[(a)]
		\item there exists $1 \leq \ell < k$ such that the last $\ell$ characters of $T_{j,k}$, in order, are the same as the first $\ell$ characters of $T_{j+1,k}$;
		\item the first and last $k+1$ characters of $T_{j,k}$ are $1, 2, \ldots, k$, and $k+1$ (not necessarily in that order); and
		\item the set of permutations that are substrings of $T_{j,k}$ remains unchanged under any interchange of the roles of the symbols $k+2, k+3, \ldots, n$ in $T_{j,k}$.
	\end{enumerate}
\end{lemma}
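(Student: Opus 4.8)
The plan is to exploit the self-similarity of $M_n$: I will argue that the substring $T_{j,k}$ is itself a scaled-down, relabelled copy of the construction of Section~\ref{sec:construct}. First I would use Proposition~\ref{prop:circ_shift_ordering} to identify the permutations comprising $T_{j,k}$. Writing the index $m$ of a permutation in the factorial-type form $m=\sum_{i=2}^{n}m_i\,n!/i!$, a short calculation shows that the indices in $[\,j\cdot n!/k!,\,(j+1)\cdot n!/k!\,)$ are exactly those for which the high-order digits $(m_2,\dots,m_k)$ are frozen to the unique tuple $(a_2,\dots,a_k)$ with $\sum_{i=2}^{k}a_i\,k!/i!=j$, while $(m_{k+1},\dots,m_n)$ ranges freely. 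Hence the permutations of $T_{j,k}$ are precisely those of the form $[a_2\cdots a_k\,m_{k+1}\cdots m_n]_c$, which are exactly the permutations produced by running the recursive step of Section~\ref{sec:construct} for the levels $k+1,\dots,n$ starting from the single seed permutation $P^{(j)}:=[a_2\cdots a_k]_c$ (which, by Proposition~\ref{prop:circ_shift_ordering} applied at $n=k$, is the $(j+1)$th permutation of $M_k$). I would record this as the statement that $T_{j,k}$ equals the ``seeded'' analogue of $M_n$, and observe that this seeded construction is equivariant under relabelling: if $\rho_j$ denotes the permutation of $[n]$ agreeing with $P^{(j)}$ on $\{1,\dots,k\}$ and fixing $k+1,\dots,n$, then because $\rho_j$ fixes each newly inserted symbol $k+1,\dots,n$ it commutes with every step of the construction, giving $T_{j,k}=\rho_j(T_{0,k})$.

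This reduction does most of the work for (b) and (c), since $\rho_j$ permutes $\{1,\dots,k\}$, sends the set $\{1,\dots,k+1\}$ to itself, and commutes with any relabelling of $\{k+2,\dots,n\}$; thus it suffices to prove (b) and (c) for $T_{0,k}$. For (b) I would compute the two boundary permutations of block $0$ directly from the cyclic-shift procedure. The first is the identity $12\cdots n$, whose initial $k+1$ symbols are $12\cdots(k+1)$. The last is $[0\cdots 0\,k\,(k+1)\cdots(n-1)]_c$; tracking the procedure (each maximal shift moves the newest symbol to the front) shows it equals $n(n-1)\cdots(k+2)(k+1)\,12\cdots k$, whose final $k+1$ symbols are $(k+1)12\cdots k$. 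Both boundary strings are arrangements of $\{1,\dots,k+1\}$, which is (b).

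For (c) I would prove, by induction on the level $m$ from $m=k+1$ up to $n$, that the permutations appearing in the seeded string for block $0$ at level $m$ are exactly those permutations of $[m]$ whose subsequence on $\{1,\dots,k+1\}$ (obtained by deleting all larger symbols) is one of the $k+1$ cyclic rotations of $12\cdots(k+1)$. At level $m=k+1$ the permutations are precisely those rotations, giving the base case. For the inductive step, the length-$(m+1)$ windows of $\tau\,(m+1)\,\tau$ are exactly the cyclic rotations of $\tau$ with $m+1$ inserted, and the $\{1,\dots,k+1\}$-subsequence of a cyclic rotation of $\tau$ is a cyclic rotation of the $\{1,\dots,k+1\}$-subsequence of $\tau$, so the rotation property propagates. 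Comparing cardinalities (both sides have size $n!/k!$ at level $n$) upgrades the resulting inclusion to equality. Since this characterization mentions only the $\{1,\dots,k+1\}$-subsequence, it is manifestly invariant under any relabelling of $\{k+2,\dots,n\}$, which is (c) for $T_{0,k}$ and hence, via $\rho_j$, for every $T_{j,k}$.

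Finally, for (a) I would combine the explicit boundary permutations from (b) with overlap inheritance. By (b) and equivariance, the last permutation of block $j$ is $n(n-1)\cdots(k+1)$ followed by the one-line form of $P^{(j)}$, and the first permutation of block $j+1$ is the one-line form of $P^{(j+1)}$ followed by $(k+1)(k+2)\cdots n$. Any common suffix/prefix of length at most $k$ therefore lies inside the one-line forms of $P^{(j)}$ and $P^{(j+1)}$, so such a match is a shared suffix of $P^{(j)}$ and prefix of $P^{(j+1)}$; meanwhile an overlap of length $k+1$ is impossible, since it would force the symbol $k+1$ (the $(k+1)$th symbol from the end of the first string) to equal the first symbol of the second string, which is at most $k$. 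Thus the maximal overlap equals the overlap of $P^{(j)}$ and $P^{(j+1)}$ as consecutive permutations of $M_k$, which is at least $1$ and at most $k-1$; taking $\ell$ to be this value yields (a). The step I expect to be the main obstacle is making the self-similar decomposition of the first paragraph fully rigorous---namely that the shortest substring $T_{j,k}$ really coincides with the seeded construction and that the greedy overlaps interior to block $j$ never spill across its boundaries. This is entangled with the overlap bookkeeping of (a), so I would most likely establish both at once by a single induction on $n-k$ over the recursive construction of Section~\ref{sec:construct}, carrying (b) and (c) along as invariants.
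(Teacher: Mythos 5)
Your proposal is sound, and for parts (a) and (b) it takes a genuinely different route from the paper. The paper dispatches (a) and (b) with a short induction on $n$ holding $k$ fixed: both properties hold by inspection for $M_3$, they are preserved in passing from $M_n$ to $M_{n+1}$ because the construction replaces each permutation of a level-$k$ block by its $Q$-string without disturbing the block boundaries, and the one new case $k=n$ is handled by noting $T_{j,n}=Q_j$. It never identifies the boundary permutations, the exact overlap values, or any relabelling symmetry of the strings. Your argument instead extracts explicit data: the first and last permutations of each block (your computation $[0\cdots0\,k\,(k{+}1)\cdots(n{-}1)]_c = n(n{-}1)\cdots(k{+}1)\,12\cdots k$ is correct), the equivariance $T_{j,k}=\rho_j(T_{0,k})$, and the fact that the block-to-block overlap is inherited exactly from the overlap of $P^{(j)}$ and $P^{(j+1)}$ in $M_k$. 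These finer claims are all true, and they buy you more than the lemma asks for (e.g., the precise value of $\ell$ in (a)); the cost is exactly the obstacle you flag at the end, since string-level equivariance and non-spillage of greedy overlaps across block boundaries require an induction over the construction --- which is, in essence, the induction the paper runs directly on the weaker statements (a) and (b), making its version much shorter. For part (c) your approach and the paper's coincide in the key idea: both use Proposition~\ref{prop:circ_shift_ordering} to freeze the digits $j_2,\ldots,j_k$ and characterize membership in $T_{j,k}$ purely by the condition that deleting the symbols $k+2,\ldots,n$ leaves a cyclic rotation of a fixed permutation $\tau_j$ of $[k{+}1]$, which is manifestly invariant under relabelling those symbols. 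The paper does this directly for every $j$, with no induction and no reference to the string $T_{j,k}$ itself (only to its set of permutations), so it sidesteps your equivariance machinery entirely; your detour through block $0$ plus $\rho_j$ is correct but unnecessary --- and, similarly, (b) only needs the boundary \emph{permutations} of block $j$, which follow from Proposition~\ref{prop:circ_shift_ordering} alone without the string-level claim $T_{j,k}=\rho_j(T_{0,k})$.
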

\begin{proof}
	We first prove properties (a) and (b) by induction. Both of these properties are clearly true by inspection in the $n = 3$ case. For the inductive step, assume that the result holds for $M_n$. If we fix $k$ then properties (a) and (b) clearly still hold for $M_{n+1}$ by the construction given in Section~\ref{sec:construct}. To see that properties (a) and (b) also hold for $M_{n+1}$ when $k = n$, simply note that we have $T_{j,k} = Q_j$ in this case, and $Q_j$ clearly has both properties.
	
	We now prove property (c) (directly; not via induction). It is a direct consequence of Proposition~\ref{prop:circ_shift_ordering} that there exist fixed $j_2, j_3, \ldots, j_{k}$ such that $T_{j,k}$ contains as substrings exactly the permutations whose cyclic shift representations are of the form $[j_2 j_3 \cdots j_n]_c$, where $j_i$ is free to vary from $0$ to $i-1$ when $k+1 \leq i \leq n$. It follows that there exists a fixed permutation $\tau_j$ on $[k+1]$ such that $\sigma \in S_n$ is a substring of $T_{j,k}$ if and only if deleting the symbols $k+2, k+3, \ldots, n$ from its one-line representation results in a cyclic permutation of $\tau_j$. Since this criterion for being a substring of $T_{j,k}$ is clearly independent of the locations of the symbols $k+2, k+3, \ldots, n$ in the one-line representation of $\sigma$, condition (c) follows.
\end{proof}

\begin{proof}[Proof of Theorem~\ref{thm:non_unique}]
Our procedure is to recursively construct the given number of small superpermutations from the well-known superpermutation $M_n$. Observe that the procedure that we are about to describe reduces exactly to the procedure described in Section~\ref{sec:n5construct} when $n = 5$.

\begin{enumerate}[(1)]
	\item Let $k = n - 3$ and define $SP := \{M_n\}$ to be the set of all superpermutations of the desired length that we have constructed so far.
	
	\item For each $T \in SP$ and each $0 \leq j < k!$, let $T_{j,k}$ be as in Lemma~\ref{lem:generalize_helper}. By the lemma, we can interchange the roles of $k+2, k+3, \ldots, n$ within the section of $T$ that equals $T_{j,k}$ without affecting whether or not $T$ is a superpermutation. We could do this for each value of $j$ from $0$ to $k!-1$, however we only do it for the $k! - k$ values of $j$ that are not multiples of $(k-1)!$ (we will explain the reason for this shortly). Add all of the superpermutations constructed in this way to $SP$. The cardinality of $SP$ is thus multiplied by $(n-k-1)!^{k!-k}$ during this step.
	
	\item Decrease $k$ by $1$.
	
	\item If $k = 1$, stop. Otherwise, return to step (2).
\end{enumerate}

It is perhaps not clear in step (2) why we only consider the values of $j$ that are not divisible by $(k-1)!$. The reason why we avoid $j = 0$ is straightforward: we want all of our superpermutations to start with the substring $12\cdots n$. The reason why we avoid all other values of $j$ that are divisible by $(k-1)!$ is that if we swapped the roles of $k+2, k+3, \ldots, n$ in the same way for each $j$ from $i\cdot(k-1)!$ to $(i+1)\cdot(k-1)!-1$ for some $i$, then we would end up with a superpermutation that is the same as one produced in the next iteration of the algorithm (i.e., after we decreased $k$ by $1$). We would thus end up double-counting some superpermutations. As is, in each iteration of the algorithm we alter part of each string $T$ that has never been altered before, so we can be sure that each step in the procedure really does produce superpermutations that are not already in the set $SP$.

To get the cardinality of the set $SP$ after the algorithm terminates (i.e., the number of distinct superpermutations that we have constructed), we simply recall that its cardinality is increased by a factor of $(n-k-1)!^{k!-k}$ for each value of $k$ from $n-3$ down to $2$. That is, the number of distinct superpermutations that we have constructed is
\begin{align*}
	\prod_{k=2}^{n-3} (n-k-1)!^{k!-k},
\end{align*}
which can be seen to equal the value given in the statement of the theorem by shifting the value of $k$ by $1$.
\end{proof}

%%%%%%%%%%%%%%%%%%%%%%%%%%%%%%%%%%%%%%%%%%%%%%%%%%%%%%%%%%%
\section{Conclusions}\label{sec:conclude}
%%%%%%%%%%%%%%%%%%%%%%%%%%%%%%%%%%%%%%%%%%%%%%%%%%%%%%%%%%%

We have shown that the standard construction of small superpermutations generalizes in a natural way that disproves at least one of two long-standing conjectures. While it is possible that Conjecture~\ref{conj:min_length} is false and Conjecture~\ref{conj:unique} is true, we consider this possibility rather unlikely, as Conjecture~\ref{conj:min_length} being false would seem to indicate that minimal superpermutations in general do not have as much structure as suggested by the $n \leq 4$ cases. We thus expect that either both conjectures fail, or Conjecture~\ref{conj:min_length} is true and Conjecture~\ref{conj:unique} is false.

Our results also shed some light on approaches to proving Conjecture~\ref{conj:min_length} that will \emph{not} work. Upon seeing the abundance of symmetry in the superpermutations in the $1 \leq n \leq 4$ cases, it is tempting to try to prove that minimal superpermutations must have certain properties that the $M_n$ have. For example, one could try to prove that a minimal superpermutation on $[n]$ must be a palindrome or must contain exactly $(n-1)!$ occurrences of the symbol $n$. However, our generalized construction shows that minimal superpermutations need not have either of these properties.

While computing the length of minimal superpermutations is a fundamental combinatorial problem, and one that can be understood by anyone with even a modest mathematical background, it seems to have received relatively little attention. We hope that our results spark further interest.

%%%%%%%%%%%%%%%%%%%%%%%%%%%%%%%%%%%%%%%%%%%%%%%%%%%%%%%%%%%
\vspace{0.1in} \noindent{\bf Acknowledgements.} Thanks are extended to Dan Ashlock for helpful conversations. The author was supported by the University of Guelph Brock Scholarship, the Natural Sciences and Engineering Research Council of Canada, and the Mprime Network.
%%%%%%%%%%%%%%%%%%%%%%%%%%%%%%%%%%%%%%%%%%%%%%%%%%%%%%%%%%%

%%%%%%%%%%%%%%%%%%%%%%%%%%%%%%%%%%%%%%%%%%%%%%%%%%%%%%%%%%%
\bibliographystyle{plain}

\begin{thebibliography}{WWW99}
	\bibitem{GMS80} J. Gallant, D. Maier, and J. A. Storer. On finding minimal length superstrings. \emph{Journal of Computer and System Sciences}, 20:50--58, 1980.
	
	\bibitem{GL01} M. K. Goldberg and D. T. Lim. A learning algorithm for the shortest superstring problem. In \emph{Proceedings of the Atlantic Symposium on Computational Biology and Genome Information Systems and Technology}, pages 171--175, 2001.

	\bibitem{KS05} H. Kaplan and N. Shafrir. The greedy algorithm for shortest superstrings. \emph{Information Processing Letters}, 93(1):13--17, 2005.
	
	\bibitem{AB51} T. van Aardenne-Ehrenfest and N.~G. de Bruijn. Circuits and trees in oriented linear graphs. \emph{Simon Stevin}, 28:203--217, 1951.

	\bibitem{AT93} D. Ashlock and J. Tillotson. Construction of small superpermutations and minimal injective superstrings. \emph{Congressus Numerantium}, 93:91--98, 1993.

	\bibitem{A180632} M. Hamm. Sequence A180632. \emph{The On-Line Encyclopedia of Integer Sequences}, Sep. 13, 2010, retrieved Oct. 21, 2012. \url{http://oeis.org/A180632}

	\bibitem{JBweb} J. A. Barnett. \emph{Permutation Strings}, retrieved Oct. 21, 2012. \url{http://www.notatt.com/permutations.pdf}

	\bibitem{JOweb} J. Orendorff. Generate sequence with all permutations. \emph{StackOverflow}, Feb. 16, 2010, retrieved Oct. 21, 2012. \url{http://stackoverflow.com/a/2274978}
	
	\bibitem{YFweb} Y. Filmus. What is the shortest string that contains all permutations of an alphabet? \emph{Mathematics - Stack Exchange}, Dec. 25, 2010, retrieved Oct. 21, 2012. \url{http://math.stackexchange.com/a/15512}
	
	\bibitem{kolistivra} Kolistivra. Turkish Informatics Olympiad problem. \emph{TopCoder Forums}, Nov. 20, 2007, retrieved Oct. 21, 2012. \url{http://apps.topcoder.com/forums/?module=Thread&threadID=594178}
	
	\bibitem{JBweb2} J. A. Barnett. Re: Integer Strings Containing All Possible Length 4 Sub-Strings. \emph{The Math Forum}, Aug. 30, 2001, retrieved Oct. 21, 2012. \url{http://mathforum.org/kb/message.jspa?messageID=1689849}

	\bibitem{Knu73} D. E. Knuth. Volume 3: Sorting and Searching. \emph{The Art of Computer Programming} (3rd ed.), Addison--Wesley, pp. 209, 1973.

%	\bibitem{demolish} Demolish. Maths General (and this problem). \emph{Jumprs.org Forums}, Sep. 14, 2011, retrieved Oct. 21, 2012. \url{http://jumprs.org/forum/showthread.php?1422-Maths-General-(and-this-problem)}

	\bibitem{SuperPerm6} N. Johnston. \emph{96 Small Superpermutations}. Published electronically at \url{http://www.njohnston.ca/superperm6.txt}, 2012.
\end{thebibliography}

\end{document}